\newtheorem{theorem}{Theorem}[section]
\theoremstyle{definition}
\theoremstyle{remark}
\numberwithin{equation}{section}
\begin{document}
\title[The Minimality of Determinantal Varieties]
{The Minimality of Determinantal Varieties}
\author[M. Bordemann, J. CHOE and J. Hoppe ]{martin bordemann, JAIGYOUNG CHOE, and JENS HOPPE}
\thanks{J.C. supported in part by NRF-2018R1A2B6004262. J.C. would like to thank the Chinese University of Hong Kong Mathematics Department for their invitation in 2019.}

\address{\parbox{\linewidth}{
Universit\'{e} de Haute Alsace, Mulhouse, France\\Korea Institute for Advanced Study, Seoul, 02455, Korea\\Braunschweig University, Germany}}
\email{martin.bordemann@uha.fr, choe@kias.re.kr, jens.r.hoppe@gmail.com}

\begin{abstract} The determinantal variety $\Sigma_{pq}$ is defined to be the set of all $p\times q$ real matrices with $p\geq q$ whose ranks are strictly smaller than $q$. It is proved that $\Sigma_{pq}$ is a minimal cone in $\mathbb R^{pq}$ and all its strata are regular minimal submanifolds.\\

\noindent{\it Keywords}: minimal submanifold, determinantal variety, helicoidal\\
{\it MSC}: 53A10, 49Q05
\end{abstract}

\maketitle

\section{introduction}
Determinantal varieties are spaces of matrices with a given upper bound on their ranks. Given $p$ and $q$ and $r<\min(p,q)$, the {\it determinantal variety} $Y_r$ is the set of all $p\times q$ matrices with ${\rm rank}\leq r$. $Y_r$ is naturally an algebraic variety as the rank condition on a matrix is equivalent to the vanishing of all of its $(r+1)\times(r+1)$ minors which are polynomials of degree $r+1$. Let $Z_r=Y_r\setminus Y_{r-1}$
denote the set of all
$p\times q$-matrices whose rank is equal to $r$. By projecting each $p\times q$ matrix of rank $r$ on
the $r$-dimensional subspace $V$ of $\mathbb{R}^p$ generated by its columns and on the $r$-dimensional subspace $W$ of $\mathbb{R}^q$ generated by its rows it can be seen that
$Z_r$ is a fibre bundle over the cartesian product of
two real Grassmannians $\mathrm{G}_r(\mathbb{R}^p)
\times\mathrm{G}_r(\mathbb{R}^q)$ where the fibre over $(V,W)$
is the set of all linear bijections $V\to W$ (canonical bundles), see e.g.~\cite[p.8]{BV88}. Hence it is a regular
submanifold of dimension $r(p-r)+qr$. It is well-known that $Y_r$ is decomposed into
the disjoint union of regular subvarieties $Z_s$,
$s$ being an integer $0\leq s\leq r$ (where of course $Y_{-1}=\emptyset$), see
e.g.~\cite[p.4-6]{BV88} for more details.

More concretely: let $M_{pq}$ the set of all real $p\times q$ matrices with $p\geq q$ and identify $M_{pq}$  with $\mathbb R^{pq}$.
 Let us define $\Sigma_{pq}=\{(x_{11},x_{12},\ldots,x_{pq})\in\mathbb R^{pq}: \det(X^TX)=0, X=(x_{ij})\in M_{pq}\}$ and call $\Sigma_{pq}$  the {\it determinantal variety} in $\mathbb R^{pq}$. It can be identified with $Y_{q-1}$. The vector space $M_{pq}$ carries a natural
 positive definite inner product
 \begin{equation}\label{EqDefInnerProductOnMatrixSpace}
   \langle X,Y\rangle = \mathrm{tr}(X^T~Y).
 \end{equation}

Many studies have been done about determinantal varieties, mostly on their algebraic properties. In this paper we are interested in an analytic property, the minimality of $\Sigma_{pq}$ in $\mathbb R^{pq}$, which we prove in 3 different ways (two of which generalize existing results for $p=q$, cp.  \cite{T}, \cite{HLT}, \cite{CH}, while concerning the first, parametric proof, see \cite{H} for some low-dimensional $p>q$ examples):
\begin{theorem}\label{Hauptsatz}
Every stratum $Z_r$ for $0\leq r\leq q-1$ of the determinantal variety $\Sigma_{pq}$ is a regular minimal submanifold of the inner product space $M_{pq}$.
\end{theorem}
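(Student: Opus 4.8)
The plan is to exploit the large isometry group of $M_{pq}$ preserving each stratum, so that minimality reduces to a single reflection argument at a diagonal matrix rather than a direct computation of the second fundamental form in a chart.

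First I would record that $G=\mathrm{O}(p)\times\mathrm{O}(q)$ acts on $M_{pq}$ by $(P,Q)\cdot X = PXQ^{T}$, and that this action is by isometries for the inner product \eqref{EqDefInnerProductOnMatrixSpace}, since $\langle PXQ^T,PYQ^T\rangle=\mathrm{tr}(QX^TP^TPYQ^T)=\mathrm{tr}(X^TY)=\langle X,Y\rangle$. As the action preserves rank, each $Z_r$ is $G$-invariant. By the singular value decomposition every $X\in Z_r$ can be written $X=P\Lambda Q^T$ with $(P,Q)\in G$ and $\Lambda$ the $p\times q$ matrix whose only nonzero entries are $r$ positive numbers on the main diagonal. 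Because the mean curvature field $\vec H$ of a submanifold is equivariant under ambient isometries preserving it, it suffices to prove $\vec H(\Lambda)=0$ for such diagonal $\Lambda$; the value at a general $X$ follows by transporting with $(P,Q)$.

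The core step is the tangent/normal splitting at $\Lambda=\bigl(\begin{smallmatrix}D&0\\0&0\end{smallmatrix}\bigr)$, $D$ the invertible $r\times r$ diagonal part. Writing a nearby matrix in blocks $\begin{pmatrix}A&B\\C&E\end{pmatrix}$ with $A$ near $D$, the rank-$r$ condition is the Schur relation $E=CA^{-1}B$; differentiating at $B=C=0$ annihilates every variation of $E$, so
\[
T_{\Lambda}Z_r=\left\{\begin{pmatrix}\dot A&\dot B\\ \dot C&0\end{pmatrix}\right\},\qquad
N_{\Lambda}Z_r=\left\{\begin{pmatrix}0&0\\0&F\end{pmatrix}:F\in\mathbb{R}^{(p-r)\times(q-r)}\right\},
\]
the normal space being exactly the lower-right block, which is immediately the Frobenius-orthogonal complement of $T_{\Lambda}Z_r$. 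Now consider the isometry $g=(P_0,I_q)$ with $P_0=I_r\oplus(-I_{p-r})$, negating the last $p-r$ rows. Since $r<p$ forces those rows of $\Lambda$ to vanish, $g$ fixes $\Lambda$; it preserves $Z_r$, maps $T_{\Lambda}Z_r$ to itself, and acts as $-\mathrm{id}$ on $N_{\Lambda}Z_r$ (it sends $F\mapsto-F$). Equivariance then gives $\vec H(\Lambda)=dg\bigl(\vec H(\Lambda)\bigr)=-\vec H(\Lambda)$, hence $\vec H(\Lambda)=0$. Since $p\geq q\geq r+1$ gives $p-r\geq1$, the reflection always exists, covering all strata $0\leq r\leq q-1$ (with $Z_0=\{0\}$ trivially minimal).

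I expect the only delicate points to be the rigorous tangent-space computation, justified by the regular-submanifold structure already recorded in the introduction, and a clean statement of the isometry-equivariance of $\vec H$. As a cross-check and as the more computational alternative foreshadowed in the paper, one can instead parametrize $Z_r$ directly by $(A,B,C)\mapsto\begin{pmatrix}A&B\\C&CA^{-1}B\end{pmatrix}$ and evaluate the mean curvature from the induced metric; there the main obstacle is controlling the derivatives of the Schur term $CA^{-1}B$, which the symmetry argument entirely avoids.
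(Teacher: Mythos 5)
Your proposal is correct and is essentially the paper's own geometric proof (Section 3): the reflection $g=(I_r\oplus(-I_{p-r}),\,I_q)$ fixing $\Lambda$ and acting as $-\mathrm{id}$ on the normal space is exactly the paper's isometry $\varphi_B$ (with $B$ the orthogonal reflection through the column space of $X$), transported to a diagonal point, and your final step is precisely the mean-curvature equivariance argument of Theorem \ref{helicoidalthm}. The only cosmetic differences are that you first reduce to $\Lambda$ via singular value decomposition and identify $T_\Lambda Z_r$ through the Schur-complement graph $E=CA^{-1}B$, where the paper instead works at a general $X$ and identifies the normal space via curves through the set $N$ of matrices with column space in $\mathcal{C}(X)$ or row space in $\mathcal{R}(X)$.
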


\section{Parametric Proof}
\noindent Consider the real vector space $\mathbb{R}^{pr+r(q-r)}$ whose elements are written as pairs $\left(\begin{array}{c}
       a \\
       \lambda
\end{array}\right)$ where $a = (\vec{a_1} \vec{a_2} \ldots \vec{a_r}) =
(a_{Js})_{\substack
	{J = 1\ldots p \\
		s = 1\ldots r}}$ is a real $p\times r$-matrix and
$\lambda = (\lambda _{ss'})_{\substack{
		s=1\ldots r \hphantom{:=q-r} \\ s'= 1\ldots r':=q-r}}$ is a real $r\times (q-r)$-matrix.
Let $U_{pqr}$ denote the open subset of $\mathbb{R}^{pr+r(q-r)}$ consisting of those pairs
$\left(\begin{array}{c}
a \\
\lambda
\end{array}\right)$ where the matrix $a$ is of maximal rank
$r$. We denote the natural euclidean scalar product on
$U_{pqr}$ by $\gamma$, i.e.
\begin{equation}
      \gamma\left(\left(\begin{array}{c}
       a \\ \lambda
\end{array}\right),\left(\begin{array}{c}
a' \\ \lambda'
\end{array}\right)\right) = \mathrm{tr}(a^T~a')+
                \mathrm{tr}(\lambda^T~\lambda').
\end{equation}
Then the following smooth map $X:U_{pqr}\to M_{pq}\cong\mathbb{R}^{pq}$ given by
\begin{equation}\label{EqDefMapX}
X : \begin{pmatrix}
a\\
\lambda
\end{pmatrix} \mapsto X  \begin{pmatrix}
a\\
\lambda
\end{pmatrix}  = (a,a\cdot \lambda)
\end{equation}
clearly parametrizes an open subset of the stratum $Z_r$
(the regular submanifold of all rank $r$
$p\times q$-matrices) of the determinantal variety $\Sigma_{pq}$.
It can be seen as a natural chart domain of $Z_r$ (with the inverse of $X$ being the chart), the other domains
being obtained by permutations of the $r$ column vectors
$\vec{a_1}, \vec{a_2}, \ldots ,\vec{a_r}.$\\
Due to
\begin{equation}\label{EqCompDerivativeOfX}
D_{\left(\substack{ a \\ \lambda} \right)}
{\begin{pmatrix}
c\\
\mu
\end{pmatrix}}
:= \dfrac{d}{dt}X\begin{pmatrix}
a+tc\\
\lambda + t\mu
\end{pmatrix}\vert_{t=0} = (c,c\lambda+a\mu)
\end{equation}
\bigskip
the Riemannian metric $\hat{G}=X^*\langle~,~\rangle$ on $U_{pqr}$ that is induced from the inner product $\langle~,~\rangle$ (\ref{EqDefInnerProductOnMatrixSpace}) on $\mathbb R^{pq} \cong M_{pq}$ (the space of real $p\times q$ matrices) by $X$ is computed to be
\begin{equation}
\begin{array}{ll}
\widehat{G}_{\left(\substack{ a \\ \lambda} \right)}
{\left(\begin{pmatrix}
c_1\\\mu_1
\end{pmatrix},
\begin{pmatrix}
c_2\\\mu_2
\end{pmatrix}\right)} & = \mathrm{tr}(c^T_1(c_2+c_2\lambda \lambda^T + a\mu_2\lambda^T)) + \mathrm{tr}(\mu^T_1(a^Ta\mu_2+a^Tc_2\lambda))\\
& = \gamma{\left(\begin{pmatrix}
c_1\\\mu_1
\end{pmatrix},
\phi_{\left(\substack{ a \\ \lambda} \right)}
    {\begin{pmatrix}
c_2\\\mu_2
\end{pmatrix}}\right)},
\end{array}
\end{equation}
with
\begin{equation}
\phi_{\left(\substack{ a \\ \lambda} \right)}
{\begin{pmatrix}
c\\
\mu
\end{pmatrix}} = \begin{pmatrix}
c(1+\lambda \lambda^T) + a\mu \lambda^T\\
a^Tc\lambda + a^Ta\mu
\end{pmatrix}.
\end{equation}
Written as a $(pr + rr')\times (pr + rr')$ dimensional symmetric matrix,
\begin{equation}
\widehat{G} =
\begin{pmatrix}
G & B\\ B^T &D
\end{pmatrix} =
\begin{pmatrix}
g_{\beta \gamma} & b_{\beta v}\\ b_{\gamma u} & d_{uv}
\end{pmatrix}=
\begin{pmatrix}
R_{1+\lambda \lambda^T} & L_aR_{\lambda^T}\\ L_{a^T}R_\lambda & L_{a^Ta}
\end{pmatrix}
\end{equation}
where $R_Y$ resp. $L_Y$ denote right-, resp. left-, multiplication with $Y$.\\ \\
As both $G = (g_{\alpha \beta})_{\alpha, \beta = 1\ldots p\cdot r }$ and
$D =(d_{uv})_{u,v=1\ldots r\cdot r'}$
are invertible one may invert $\widehat{G}$ in the form
\begin{equation}\label{eq7}
\begin{array}{ll}
\widehat{G}^{-1} & =
\begin{pmatrix}
G^{-1}+ G^{-1}B\rho B^TG^{-1}  & -G^{-1}B\rho\\
  & \\
 -\rho B^TG^{-1} & \rho
\end{pmatrix}\\
 & \\
 & = \begin{pmatrix}
g^{\gamma\alpha}+b^\gamma_{\;\;u} \rho^{uv} b^\alpha_{\;\;v} & -b^\gamma_{\;\;u} \rho ^{uv}\\
 & \\
-\rho^{uv}b^\alpha_{\;\;v} & \rho^{uv}
\end{pmatrix}
\end{array}
\end{equation}
with $\rho$, defined as the inverse of $D-B^TG^{-1}B$, existing due to the factorization
\begin{equation}\label{eq8}
\begin{pmatrix}
G & B\\ B^T &D
\end{pmatrix} =
\begin{pmatrix}
1 & 0 \\ B^TG^{-1} & 1
\end{pmatrix}
\begin{pmatrix}
G & B \\ 0 & D-B^TG^{-1}B
\end{pmatrix};
\end{equation}
or, following from the factorization
\begin{equation}
\begin{pmatrix}
G & B\\ B^T &D
\end{pmatrix} =
\begin{pmatrix}
G' & B'\\ 0 & D'
\end{pmatrix}
\begin{pmatrix}
1 & 0 \\ K & 1
\end{pmatrix}
\end{equation}
in the form
\begin{equation}
\widehat{G}^{-1} =
\begin{pmatrix}
(G')^{-1} & -(G')^{-1}BD^{-1} \\
  & \\
 -D^{-1}B^T(G')^{-1}  & D^{-1}+D^{-1}B^T(G')^{-1}BD^{-1}
\end{pmatrix}.
\end{equation}
Due to the blocks in $\widehat{G}$ being given in terms of left- and right- multiplication one also has, more explicitly,
\begin{equation}\label{eq11}
\widehat{G}^{-1} =
\begin{pmatrix}
1-L_{P_{a}}R_{\lambda \lambda^T(1+\lambda \lambda^T)^{-1}} & -L_{a(a^{T}a)^{-1}}R_{\lambda^T} \\
  & \\
 R_{\lambda}L_{(a^Ta)^{-1}a^T}  & \rho = L_{(a^Ta)^{-1}}R_{1+\lambda^T\lambda}
\end{pmatrix}
\end{equation}
with $P_a := 1-a(a^Ta)a^T$ projecting onto the $r''=p-r$ dimensional space orthogonal to the column-space $C(a)$, i.e. the span of the $\vec{a}_s$.
Using $L_Y L_{Y'} = L_{YY'}$, $R_Y R_{Y'} = R_{Y'Y}$ one trivially verifies $\widehat{G}^{-1}\cdot \widehat{G} = 1$ also in the $R-L$ form.

To shown the minimality of $Y_r$ in the chart defined by
$X$ we have to compute the second order derivatives of
$X$ summed over the components of $\widehat{G}^{-1}$ and prove that the component normal to the tangent space of that vector (the mean curvature vector $H$) vanishes. A straight-forward, but rather lengthy computation gives
($\mu=(\alpha,u)$, $\nu=(\beta,v)$)
\begin{equation}
  \sum\hat{G}^{\mu\nu}\frac{\partial^2 X}
   {\partial x^\mu\partial x^\nu}
   \left(\begin{array}{c}
        a \\
        \lambda
   \end{array}\right)
   = -2
   D_{\left(\substack{ a \\ \lambda} \right)}X
   \left(\begin{array}{c}
   0 \\
   (a^Ta)^{-1}\lambda
   \end{array}\right),
\end{equation}
which obviously is tangential to the surface whence $H$
vanishes.\\
The computation can be shortened by the following simple
arguments:
note that the only non-vanishing second derivatives of $X$ are (independent of $s$)
\begin{equation}\label{eq12}
\dfrac{\partial^2X}{\partial a_{Js}\partial \lambda_{ss'}} = E_{Js'}
\end{equation}
(the matrix that is everywhere zero except =1 in the $J^{\text{th}}$ row and $s'$ column),
and that the $r'r'' = r'(p-r)$ normal directions are
\begin{equation}\label{eq13}
N_{s's''} := \gamma_{s'}(\lambda_{1s'}\vec{e}_{s''}, \lambda_{2s'}\vec{e}_{s''}, \ldots, \lambda_{rs'}\vec{e}_{s''}, \vec{0},\ldots, -\vec{e}_{s''},\vec{0}\ldots\vec{0} )
\end{equation}
$$
\hphantom{N_{s's''} := \gamma_{s'}(\lambda_{1s'}\vec{e}_{s''}, \lambda_{2s'}\vec{e}_{s''}, \ldots, \lambda_{rs'}\vec{e}_{s''}, \vec{0},\ldots, \ldots \ldots}  \uparrow {\scriptsize \text{at the } s'\text{-position}}
$$
where $s' = 1 \ldots r' = q-r, s'' = 1 \ldots r'' = p-r $,
and the $\lbrace\vec{e}_{s''}\rbrace_{s'' = 1\ldots r''}$ being an orthonormal basis of the kernel of $a^T$. The only non-vanishing elements of the second fundamental form are therefore
\begin{equation}\label{eq14}
h^{(t't'')}_{Js,ss'} = \gamma_{t'}\delta_{t's'}(\vec{e}_{t''})_J = h^{(t't'')}_{ss',Js} \;,
\end{equation}
hence
\begin{equation}
H^{(t't'')}\sim \gamma_{t'}\ \sum_{J,s}\widehat{G}^{Js,st'}(\vec{e}_{t''})_J
\end{equation}
for the components of the mean-curvature vector, -all of which vanish, due to the sum over $J(=1\ldots p)$, involving (only) scalar products of the $\vec{e}_{t''}$ with the $\vec{a}_s$ (all of which are $= 0$ as the $\vec{e}\,$'s are by definition in the kernel of $a^T$).
While (cp. (\ref{eq11}))
$ \widehat{G}^{Js,st'} = (-L_{a(a^Ta)^{-1}}R_{\lambda^T})^{Js,st'}$
can easily be calculated explicitely, in analogy with
\begin{equation}
B_{Jt,ss'} = \lambda_{ts'}(\vec{a}_s)_J,
\end{equation}
as $-(\vec{a}_t)_J(\vec{a}^Ta)^{-1}_{ts}\lambda_{st'}$, note that this is not really necessary, due to the following (cp.\cite{H}, p.38): as $g_{\alpha \beta} = g_{Js,Kt} \sim \delta_{JK}$ is diagonal in the indices transforming as vectors under $O(p)$, its inverse $g^{\alpha \beta}$ (as well as, trivially, $\rho^{uv}$ as the inverse of a matrix of $O(p)$-invariants) will not touch any $O(p)$ vector-index; implying that the (due to (\ref{eq12}), (\ref{eq14}) only relevant) components
\begin{equation}\label{eq17}
\widehat{G}^{Js,ss'} = -g^{Js,Kt}B_{Kt,u}\rho^{uv}
\end{equation}
must be linear combinations of the $B_{Jt,u}$, hence carry the $O(p)$ vector index entirely through the columns of $a$ (which is sufficient to conclude the vanishing of the mean curvature vector).

\section{Geometric Proof}
\noindent Our algebraic proof of the minimality of the zero determinant set $\Sigma$ in \cite{CH} was based on the property that the hypersurface $\Sigma$ is {\it helicoidal}. In this section we prove the minimality of the determinantal variety by using the fact (Theorem \ref{helicoidalthm}) that a helicoidal subset with higher codimension is still minimal (we should mention that this fact was first proved by Adrian C. Chu \cite{C}).\\

\noindent{\bf Definition.} Let $M$ be a complete Riemannian manifold and $\Sigma$ a subset of $M$. Suppose that $\Sigma_0$ is the subset of $\Sigma$ which is a twice differentiable submanifold of $M$ and $\Sigma\setminus\Sigma_0$ has measure zero on $\Sigma$. Suppose also that at any point $\bar{p}$ of $\Sigma$ there is an isometry $\varphi$ of $M$ such that
$$\varphi(\bar{p})=\bar{p}, \,\,\,\,\varphi(\Sigma)=\Sigma, \,\,\,\,\varphi_*(v)=-v\,\,\,{\rm for\,\,\,any}\,\,v\perp T_{\bar{p}}\Sigma_0.$$
Then we say that $\Sigma$ is {\it helicoidal} in $M$.

\begin{theorem}\label{helicoidalthm}
Every helicoidal subset  $\Sigma^m$ of a Riemannian manifold $M^n$ is minimal in $M$ wherever $\Sigma$ is twice differentiable.
\end{theorem}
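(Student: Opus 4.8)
The plan is to play the isometry invariance of the mean curvature vector against the sign reversal that $\varphi$ imposes on the normal directions. Fix an arbitrary point $\bar p$ at which $\Sigma$ is twice differentiable, so $\bar p\in\Sigma_0$, and let $\varphi$ be the isometry of $M$ supplied by the helicoidal hypothesis at $\bar p$. First I would observe that $\varphi$ restricts to a diffeomorphism of $\Sigma_0$ onto itself: since $\varphi$ is a smooth isometry of $M$ with $\varphi(\Sigma)=\Sigma$, it must carry the locus on which $\Sigma$ is a twice-differentiable submanifold onto that same locus, so $\varphi(\Sigma_0)=\Sigma_0$. Because $\varphi(\bar p)=\bar p$, the differential $\varphi_*$ at $\bar p$ is a linear isometry of $T_{\bar p}M$ that preserves the orthogonal splitting $T_{\bar p}M=T_{\bar p}\Sigma_0\oplus(T_{\bar p}\Sigma_0)^{\perp}$; on the second summand it equals $-\,\mathrm{id}$ by the defining property of a helicoidal set.

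Next I would invoke the standard fact that an isometry intertwines the Levi-Civita connection of $M$ with itself, and therefore intertwines the second fundamental form $\mathrm{II}$ of $\Sigma_0$ at $\bar p$ with that at $\varphi(\bar p)=\bar p$. Concretely, if $\{e_i\}$ is an orthonormal basis of $T_{\bar p}\Sigma_0$, then $\{\varphi_*e_i\}$ is again such a basis, and since the trace is basis-independent,
\begin{equation}
\varphi_*\big(H_{\bar p}\big)=\sum_i\varphi_*\,\mathrm{II}(e_i,e_i)=\sum_i\mathrm{II}(\varphi_*e_i,\varphi_*e_i)=H_{\bar p}.
\end{equation}
On the other hand, the mean curvature vector is by definition normal to $\Sigma_0$, so $H_{\bar p}\in(T_{\bar p}\Sigma_0)^{\perp}$, and the helicoidal condition $\varphi_*(v)=-v$ for $v\perp T_{\bar p}\Sigma_0$ gives $\varphi_*(H_{\bar p})=-H_{\bar p}$. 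Comparing the two evaluations forces $H_{\bar p}=-H_{\bar p}$, hence $H_{\bar p}=0$. As $\bar p$ was an arbitrary twice-differentiable point of $\Sigma$, this shows $\Sigma$ is minimal wherever it is twice differentiable.

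The only genuinely delicate step, and the one I would set up most carefully, is the equivariance $\varphi_*(H_{\bar p})=H_{\bar p}$: it relies on $\varphi$ mapping $\Sigma_0$ to $\Sigma_0$ (so that $\varphi_*$ really preserves the tangent/normal splitting) together with the isometry invariance of $\mathrm{II}$. Once that is in place, the conclusion is a one-line sign comparison, and no computation of $\mathrm{II}$ itself is needed.
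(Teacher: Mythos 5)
Your proof is correct and follows essentially the same route as the paper: both exploit that the isometry $\varphi$ fixes $\bar p$, preserves $\Sigma$, and hence leaves the mean curvature vector invariant (via frame-independence of the trace of the second fundamental form), while the helicoidal condition forces $\varphi_*(\vec H)=-\vec H$, so $\vec H=0$. Your version merely makes explicit two points the paper leaves implicit, namely that $\varphi(\Sigma_0)=\Sigma_0$ and that $\varphi_*$ preserves the tangent--normal splitting at $\bar p$, which is a welcome bit of extra care but not a different argument.
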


\begin{proof}
Let $\vec{H}$ be the mean curvature vector of $\Sigma$ at a point $\bar{p}\in\Sigma_0$, that is,
$$\vec{H}=\sum_{i=1}^{m} (\overline{\nabla}_{e_i}e_i)^\perp,$$
 where $\overline{\nabla}$ is the Riemannian connection on $M$ and $e_1,\ldots,e_{m}$ are orthonormal vectors on a neighborhood of $\bar{p}$ in $\Sigma_0$. Since $\varphi(\Sigma)=\Sigma$ and $\bar{p}$ is a fixed point of $\varphi$, one sees that $\varphi_*(e_1),\ldots,\varphi_*(e_{m})$ are also orthonormal on $\Sigma$ near $\bar{p}$. Hence
 \begin{equation}\label{H}
 \varphi_*(\vec{H})=\sum_{i=1}^{m}(\overline{\nabla}_{\varphi_*(e_i)}\varphi_*(e_i))^\perp=\sum_{i=1}^{m} (\overline{\nabla}_{e_i}e_i)^\perp=\vec{H}.
 \end{equation}
 On the other hand, by the hypothesis, $\varphi_*(\vec{H})=-\vec{H}$. Hence $\vec{H}=0$ at $\bar{p}$. As $\bar{p}$ is arbitrarily chosen, one concludes that $\Sigma$ is minimal.
\end{proof}

\begin{proof} (of Theorem \ref{Hauptsatz}):   Recall the inner product $\langle\,\,,\,\rangle$ on $M_{pq}$,
$$\langle X,Y\rangle={\rm tr}(X^TY),\,\,X,Y\in M_{pq}.$$
For any $A\in O(p)$ and $X\in M_{pq}$, define $\varphi_A(X)=AX.$  Then $\varphi_A$ is an isometry on $M_{pq}$ since $\varphi_A$ is invertible and  for $Y\in M_{pq}$,
$$\langle\varphi_A(X),\varphi_A(Y)\rangle=\langle AX,AY\rangle={\rm tr}(X^TA^TAY)={\rm tr}(X^TY)=\langle X,Y\rangle.$$
Let $Z_r\subset\Sigma_{pq}$ be the set of all $p\times q$ matrices of rank $r,\,0\leq r<q$. As $\varphi_A$ preserves rank, we have
\begin{equation}\label{SS}
\varphi_A(Z_r)=Z_r.
\end{equation}

Given $X\in Z_r$, let $\mathcal{C}(X)$ be the column space of $X$ and $\mathcal{C}(X)^\perp$ its orthogonal complement in $\mathbb R^{p}$. There exists a $p\times p$ orthogonal matrix $B$ such that $\mathcal{C}(X)$ is its eigenspace with eigenvalue 1 and $\mathcal{C}(X)^\perp$ is its eigenspace  with eigenvalue $-1$. Then we have
\begin{equation}\label{XX}
\varphi_B(X)=X.
\end{equation}

Let $\mathcal{R}(X)$ be the row space of $X$. Define a subset $N$ of $M_{pq}$ by
$$N=\{Y\in M_{pq}:\mathcal{C}(Y)\subset\mathcal{C}(X)\,\,{\rm or}\,\,\mathcal{R}(Y)\subset\mathcal{R}(X)\}.$$
Choose $Y\in N$ and let $\sigma(t)$ be the curve of $p\times q$ matrices from $X$ to $Y$ defined by $\sigma(t)=X+tY,\,0\leq t\leq1$. Clearly $\sigma(t)\subset Z_r$ for sufficiently small $t$. The tangent vector $\sigma'(0)$ at $X$ equals
$$\sigma'(0)=\lim_{t\rightarrow0}\frac{\sigma(t)-X}{t}=Y\in T_XZ_r.$$
Therefore $N$ is a subset of $ T_XZ_r$ as well.
Suppose $W$ is a $p\times q$ matrix which is perpendicular to $T_XZ_r$. Then $W\perp N$.
Here we claim
$$\mathcal{C}(W)\subset\mathcal{C}(X)^\perp.$$
Let $v_i$ be the orthogonal projection of the $i$-th column of $W$ onto $\mathcal{C}(X)$ and let $W_X$ be the $p\times q$ matrix with the $i$-th column vector $v_i,\,1\leq i\leq q$. Then $W_X$ is in $ N$ and satisfies
$$\langle W_X,W_X\rangle=\sum_i|v_i|^2.$$
Since $$0=\langle W_X,W\rangle=\langle W_X,W_X\rangle=\sum_i|v_i|^2,$$
we see that $v_i=0$ for all $i$ and so the claim follows.
Hence $\varphi_B(W)=-W$. Note that  $(\varphi_B)_*=\varphi_B$ since $\varphi_B$ is linear.
Thus
\begin{equation}\label{star}
(\varphi_{B})_*(v)=-v\,\,\,{\rm for}\,\,{\rm any}\,\,v\perp T_XZ_r.
\end{equation}
It follows from $(\ref{SS}), (\ref{XX}), (\ref{star})$  that $Z_r$ is helicoidal and by Theorem \ref{helicoidalthm} it is minimal in $\mathbb R^{pq}$.
\end{proof}

\section{Level-Set Proof}
\noindent Let us also give a level-set proof, for simplicity restricting to $\Sigma_n$, the space of  all $(n+1)\times n$ matrices of  rank $(n-1)$
\begin{equation}\label{sec2eq1}
A = \begin{pmatrix}
x_1x_2\ldots x_n\\
x_{n+1} \ldots x_{2n}\\
\vdots\\
x_{n^2+1}\ldots x_{n^2+n}
\end{pmatrix}
\end{equation}
for which the first and last rows are not identically zero; $\Sigma_n$ can be characterized by the vanishing of the upper and lower $n\times n$ determinant (each of which, alone, `trivially' defining a minimal surface),
\begin{equation}\label{sec2eq2}
\chi_1:=
\begin{vmatrix}
x_1\ldots x_n\\
\vdots\\
x_{n^2-n+1}\ldots x_{n^2}
\end{vmatrix}
\stackrel{!}{=}
0
\stackrel{!}{=}
\begin{vmatrix}
x_{n+1}\ldots x_{n+n}\\
\vdots\\
x_{n^2+1}\ldots x_{n^2+n}
\end{vmatrix}
=: \chi_2 \cdot(-1)^{n-1},
\end{equation}
and the minimality-condition for the intersection ($\chi_1 = 0$ \emph{and} $\chi_2 = 0$) taking the form
\begin{equation}\label{sec2eq3}
 \forall~A\in \Sigma_n:\quad\quad\mathrm{tr}(P\cdot \partial^2\chi_{\alpha})(A)_{\alpha = 1,2} = 0
\end{equation}
with
\begin{equation}\label{sec2eq4}
P = (P_{JK}:= \delta_{JK}-\partial_J\chi_{\alpha}(M^{-1})^{\alpha \beta}\partial_K\chi_{\beta})
\end{equation}
projecting onto the tangent-space of $\Sigma_n$ if
\begin{equation}\label{sec2eq5}
M = (M_{\alpha \beta} := (\vec{\nabla}\chi_{\alpha})^T(\vec{\nabla}\chi_{\beta}))
\end{equation}
(cp.\cite{GM})
$\mathrm{tr}(\partial^2 \chi_{\alpha}) = 0$ (as all terms in $\chi_1$ and $\chi_2$ contain each of the $n^2$, resp. $n^2 + n$ variables $x_1, \ldots, x_{n^2+n}$ at most linearly), and (\ref{sec2eq3}) follows from the stronger statement (having to do with the 2 constraints already separately defining minimal surfaces)
\begin{equation}\label{sec2eq6}
\forall~A\in\Sigma_n:\quad\quad(\vec{\nabla}\chi_{\alpha})^T(\partial^2\chi_{\beta})\vec{\nabla}\chi_{\gamma}  = 0 \quad \forall \quad 1\leq \alpha, \beta, \gamma \leq 2.
\end{equation}
While for $\alpha = \gamma$ (\ref{sec2eq6}) is easy to prove,
\begin{equation}\label{sec2eq7}
(\vec{\nabla}\chi_{\alpha})^T (\partial^2\chi_{\alpha})\vec{\nabla}\chi_{\alpha} = \frac{1}{2}\chi_{\alpha}\mathrm{tr}(\partial^2\chi_{\alpha})^2
\end{equation}
\begin{equation}\label{sec2eq8}
(\vec{\nabla}\chi_{\substack{2 \\ 1}})^T (\partial^2\chi_{\substack{1 \\ 2}})\vec{\nabla}\chi_{\substack{2 \\ 1}} = \frac{1}{2}\mathrm{tr}(\partial^2\chi_1 \partial^2\chi_2)\cdot \chi_{\substack{2 \\ 1}}
\end{equation}
the corresponding identity for $\alpha \neq \gamma$ (s.b.) seemed difficult to prove. The following argument however covers \textit{all} cases : each non-zero entry  of $(\partial^2\chi_{\alpha})$ is a $(n-2)$-dimensional determinant, which occurs exactly 4 times in $\partial^2 \chi_{\alpha}$, and those 4 terms cancel in the contraction with the 2 gradients, as (on $\chi _{\alpha} = 0$).
\begin{equation}\label{sec2eq9}
\begin{array}{l}
\partial_{a+kn}\chi_1 = -\lambda_{k+1}\partial_a\chi_1 \\
\partial_{a+kn}\chi_2 = -\mu_{k+1}\partial_{a'}\chi_1 \\
{\scriptsize a = 1\ldots n, \; k = 0 \ldots n, \; a':= a+n^2}
\end{array}
\end{equation}
with $\lambda_{n+1} = 0 = \mu_1, \lambda_1 = -1 = \mu_{n+1}$, and constants $\lambda_2 \ldots \lambda_n, \mu_2 \ldots \mu_n$ appearing when writing the first row of $A$ as a linear combination of the last $n$, resp. the last row in terms of the first $n$.
As mentioned above, (\ref{sec2eq7}) and (\ref{sec2eq8}) can easily be proven (without the crucial observation (\ref{sec2eq9})), by noting (cp. \cite{HLT}, \cite{T})
\begin{equation}\label{sec2eq10}
\begin{array}{l}
\dfrac{\partial \chi_{\alpha}}{\partial A_{Ji}} = \chi_{\alpha}M^{iJ}_{\alpha} \\[0.5cm]
\dfrac{\partial^2\chi_{\alpha}}{\partial A_{Li}\partial A_{Kj}} = \chi_{\alpha}(M^{iL}_{\alpha}M^{jK}_{\alpha} - M^{jL}_{\alpha}M^{ik}_{\alpha})\\[0.5cm]
{\scriptsize J,K,L = 1\ldots n+1, \; i,j = 1\ldots n}
\end{array}
\end{equation}
where $M^{\cdot \cdot}_1$ is the inverse of $(M_1)..$, the upper $n \times n$ part of $A$ and $M^{\cdot \cdot}_2$ is the inverse of the lower one, $(M_2)..$, with the understanding that
$M^{i,J = n+1}_{1} = 0 = M^{i,J = 1}_{2}$. For $\alpha \neq \gamma$ the trick (cp.\cite{HLT}) that the $[ij]$ antisymmetry of the second derivative, which antisymmetrizes the products of the 2 gradients (hence giving the trace of 2 Hessians when $\alpha = \gamma$, with one determinant surviving) does not (simply) apply-although almost certainly resulting in
\begin{equation}\label{sec2eq11}
(\vec{\nabla}\chi_2)^T \partial^2\chi_1 \vec{\nabla}\chi_1 = \frac{\chi_2}{4} \mathrm{tr}\partial^2\chi_1\partial^2\chi_2 + \frac{\chi_1}{4}\mathrm{tr}(\partial^2\chi_2)^2,
\end{equation}
in analogy with (\ref{sec2eq7}), (\ref{sec2eq8}). Eqn (\ref{sec2eq9}) however, which in matrix-notation, due to $x_{a+(K-1)_n} = A_{Ka}$ simply reads (on $\chi_{\alpha} = 0$)
\begin{equation}\label{sec2eq12}
\begin{array}{l}
\dfrac{\partial \chi_1}{\partial A_{ka}} = -\lambda_k \dfrac{\partial \chi_1}{\partial A_{1a}} \, (= \lambda_k \lambda'_a\partial_{11} \chi_1), \\
\dfrac{\partial \chi_2}{\partial A_{ka}} = -\mu_k \dfrac{\partial \chi_2}{\partial A_{n+1,a}} \, (= \mu_k \mu'_a\partial_{n+1,n+1} \chi_2)
\end{array}
\end{equation}
can be used to easily prove (\ref{sec2eq6}), for \textit{any} $\alpha, \beta, \gamma$ as
\begin{equation}\label{sec2eq13}
\dfrac{\partial^2 \chi_{\beta}}{\partial A_{Li}\partial A_{Kj}} = -\dfrac{\partial^2 \chi_{\beta}}{\partial A_{Lj}\partial A_{Ki}},
\end{equation}
combined with (\ref{sec2eq12}), gives
\begin{equation}\label{sec2eq14}
\begin{array}{r}
\dfrac{\partial^2 \chi_{\beta}}{\partial A_{Li}\partial A_{Kj}}
(\dfrac{\partial \chi_{\alpha}}{\partial A_{Li}} \dfrac{\partial \chi_{\gamma}}{\partial A_{Kj}} +
\dfrac{\partial \chi_{\alpha}}{\partial A_{Kj}} \dfrac{\partial \chi_{\gamma}}{\partial A_{Li}} -
\dfrac{\partial \chi_{\alpha}}{\partial A_{Lj}} \dfrac{\partial \chi_{\gamma}}{\partial A_{Ki}} -
\dfrac{\partial \chi_{\alpha}}{\partial A_{Ki}} \dfrac{\partial \chi_{\gamma}}{\partial A_{Lj}}
)\\[0.5cm]
\sim (\lambda_{\alpha L} \lambda_{\gamma K} +
\lambda_{\alpha K} \lambda_{\gamma L} -
\lambda_{\alpha L} \lambda_{\gamma K} -
\lambda_{\alpha K} \lambda_{\gamma L}
) = 0,
\end{array}
\end{equation}
using that $\dfrac{\partial \chi_1}{\partial A_{1a}} = \dfrac{\partial \chi_2}{\partial A_{n+1,a}}$.\\[0.5cm]
Concerning (\ref{sec2eq12}), note that the following (stronger) statement can be easily proven for any $n \times n$ determinant $\chi$: the matrix formed out of the derivatives of $\chi$ (w.r.t the $n^2$ variables) on $\chi = 0$ is always of rank 1; choose any element (say $M_{11}$)for which the corresponding row and column are both not identically zero.
\begin{equation}\label{sec2eq15}
\chi = \sum_{a_1\ldots a_n}M_{1a_1}M_{2a_2}\ldots M_{na_n}\varepsilon^{a_1\ldots a_n}
\end{equation}
gives
\begin{equation}\label{sec2eq16}
\dfrac{\partial \chi}{\partial M_{ij}} = \sum_{a_1\ldots a_{i-1}a_{i+1} \ldots a_n}M_{1a_1}\ldots M_{i-1a_{i-1}} M_{i+1a_{i+1}} \ldots M_{na_n}\varepsilon^{a_1\ldots j \ldots a_n}_{\hphantom{a_1\ldots}\uparrow \text{at}\, i^{th} \, \text{position} } ;
\end{equation}
using
\begin{equation}\label{sec2eq17}
M_{1a_1} = \sum_{k\neq 1}\lambda_k M_{ka_1}
\end{equation}
and relabelling $a_1$ as $a_i$ after interchanging $a_1$ and $j$ in the $\varepsilon$-tensor then gives
\begin{equation}\label{sec2eq18}
\begin{array}{ll}
\dfrac{\partial \chi}{\partial M_{ij}} &
= - \lambda_i \sum M_{2a_2} \ldots M_{ia_i} \ldots M_{na_n} \varepsilon^{j^{a_2 \ldots a_n}}\\
&
= -\lambda_i \dfrac{\partial \chi}{\partial M_{1j}} \;(resp.\; \ldots = -\lambda_i \dfrac{\partial \chi_2}{\partial M_{n+1,j}} ),
\end{array}
\end{equation}
as all the terms $k \neq j$ in the sum $k \neq 1$ give zero, due to the $\varepsilon$-tensor. Analogously, then writing (on the r.h.s. of (\ref{sec2eq18})) $
\chi = \sum\limits_{c_1\ldots c_n} M_{c_1 1} M_{c_2 2} \ldots M_{c_n n} \varepsilon^{c_1 \ldots c_n}
$,
and using $M_{c_1 1} = \sum\limits_{l \neq 1} \rho_l M_{c_1 l}$ gives
\begin{equation}\label{sec2eq19}
\dfrac{\partial \chi}{\partial M_{ij}} = \lambda_i\rho_j \dfrac{\partial \chi}{\partial M_{11}}
\quad \text{on} \chi = 0.
\end{equation}
Similar arguments can be used for the general case, $p \geq q \geq r$.

\section{Generalizations}
\noindent \textbf{1.}
Among the possible generalizations, it is natural to replace real by complex numbers and to consider complex $p\times q$ matrices of rank $r$ which also forms a $r^2+r(p-r)+r(q-r)$ complex submanifold $Z^\mathbb{C}_r$ of the space of all $p\times q$-matrices,  $\mathbb{C}^{pq}$. Putting the sesquilinear
inner product
$\langle Z,Z'\rangle=
\mathrm{tr}(Z^\dagger Z')$ on $\mathbb{C}^{pq}$ we can view
it as a K\"{a}hler manifold (recall that the riemannian metric is given by the real part of the sesquilinear form) and $Z^\mathbb{C}_r$ as a complex submanifold. It is well-known that arbitrary
complex submanifolds of K\"{a}hler manifolds are always
minimal submanifolds, see e.g.~\cite[p.380]{KN69} for a simple argument. Hence $Z^\mathbb{C}_r$ is a minimal submanifold of $\mathbb{C}^{pq}$. Moreover, since
$Z^\mathbb{C}_r$ is obviously invariant under multiplication
with nonzero complex numbers, $\mathbb{C}^\times$, the above-mentioned fact also
implies that the projectivization
$Z^\mathbb{C}_r/\mathbb{C}^\times$ is a complex, hence minimal submanifold of complex projective space
$\mathbb{C}P^{pq-1}=
(\mathbb{C}^{pq}\setminus \{0\})/
\mathbb{C}^\times$(in the real case the projectivizations are minimal in spheres).

 As concrete examples let us here consider only $2$ special cases, $p = q = r+1$ and $p = 3, q = 2, r = 1$, i.e. the 8 dimensional real manifold of complex $3 \times 2$ matrices $Z =
\begin{pmatrix}
z_1 & z_4 \\
z_2 & z_5 \\
z_3 & z_6
\end{pmatrix}
= (\vec{x}+i\vec{y}, \vec{u}+i\vec{v})
\stackrel{\wedge}{=} \stackrel{\rightarrow}{z} \in \mathbb R^{12}
$
of the form
\begin{equation}\label{sec5eq1}
Z(\vec{x}, \vec{y}, \lambda, \mu) = (\vec{x}+i\vec{y}, (\lambda+i\mu)(\vec{x}+i\vec{y})),
\end{equation}
$\vec{x}, \vec{y} \in \mathbb R^{3}$ which is the
complex analog of the map $X$ is the first Section. We shall put the
inner product
$\langle Z,Z'\rangle=
\mathrm{Re}\big(\mathrm{tr}(Z^\dagger Z')\big)$ on the complex matrix space. With
\begin{equation}\label{sec5eq2}
\partial_{x^i}{\stackrel{\rightarrow}{z}} =
\begin{pmatrix}
\vec{e}_i\\
\vec{0}\\
\lambda\vec{e}_i\\
\mu\vec{e}_i
\end{pmatrix},\:
\partial_{y^i}{\stackrel{\rightarrow}{z}} =
\begin{pmatrix}
\vec{0}\\
\vec{e}_i\\
-\mu\vec{e}_i\\
\lambda\vec{e}_i
\end{pmatrix},\:
\partial_{\lambda}{\stackrel{\rightarrow}{z}} =
\begin{pmatrix}
\vec{0}\\
\vec{0}\\
\vec{x}\\
\vec{y}
\end{pmatrix},\:
\partial_{\mu}{\stackrel{\rightarrow}{z}} =
\begin{pmatrix}
\vec{0}\\
\vec{0}\\
-\vec{y}\\
\vec{x}
\end{pmatrix}
\end{equation}
one gets the following real symmetric $8\times 8$-matrix
(written in four blocks of dimension $6\times 6$, $6\times 2$,
$2\times 6$, and $2\times 2$) for the induced metric:
\begin{equation}\label{sec5eq3}
(\hat{G}_{AB}) = \left(\begin{array}{cc}
  (1+\lambda^2+\mu^2)\mathbb{1}_{6\times 6} &
    \begin{array}{cc}
      \lambda\vec{x}+\mu\vec{y} & -\lambda\vec{y}+\mu\vec{x} \\
      \lambda\vec{y}-\mu\vec{x} & \lambda\vec{x}+\mu\vec{y}
    \end{array} \\[5mm]
  \begin{array}{cc}
  \lambda\vec{x}^T+\mu\vec{y}^T & \lambda\vec{y}^T
                                   -\mu\vec{x}^T \\
  -\lambda\vec{y}^T+\mu\vec{x}^T & \lambda\vec{x}^T+\mu\vec{y}^T
  \end{array} &  (\vec{x}\,^2+\vec{y}\,^2)\mathbb{1}_{2\times2}
 \end{array}\right)
\end{equation}
(note that the two 6-dimensional vectors in the off-diagonal block(s) are orthogonal to each other, and of equal length).
As the only non-vanishing second derivatives $\partial^2_{AB}{\stackrel{\rightarrow}{z}}$ are
\begin{equation}\label{sec5eq4}
\partial^2_{x^i\lambda}{\stackrel{\rightarrow}{z}} =
\begin{pmatrix}
\vec{0}\\
\vec{0}\\
\vec{e}_i\\
\vec{0}
\end{pmatrix} = - \partial^2_{y^i\mu}{\stackrel{\rightarrow}{z}}, \quad
\partial^2_{x^i\mu}{\stackrel{\rightarrow}{z}} =
\begin{pmatrix}
\vec{0}\\
\vec{0}\\
\vec{0}\\
\vec{e}_i
\end{pmatrix} = \partial^2_{y^i\mu}{\stackrel{\rightarrow}{z}},
\end{equation}
and the 4 normal directions being of the form $\stackrel{\rightarrow}{n_{\alpha}}^T = (\ldots \ldots N^T_{\alpha})$
with $N_\alpha \in \mathbb{R}^6$, $\alpha = 1,2,3,4$, orthogonal to the plane spanned by
$
\begin{pmatrix}
\vec{x}\\ \vec{y}
\end{pmatrix} =: E_1
$ and
$
E_2 :=
\begin{pmatrix}
-\vec{y}\\ \vec{x}
\end{pmatrix}
$, the 4 mean-curvature components
$
H_{\alpha} \sim \hat{G}^{AB} \stackrel{\rightarrow}{n_{\alpha}} \cdot \;
\partial^2_{AB}{\stackrel{\rightarrow}{z}}
$
will vanish (cp. the comments around eqn (2.17), p.38 of \cite{H}) due to
$
N_{\alpha} \cdot E_1 = 0 = N_{\alpha} \cdot E_2
$;
in the notation of section 2 :
$
G = (1+\lambda^2 + \mu^2)\mathbb{1}_{6\times 6}, \;
D = ((\vec{x}\,^2+\vec{y}\,^2))\mathbb{1}_{2\times 2}
$,
\begin{equation}\label{sec5eq5}
\rho^{-1} = (\vec{x}\,^2+\vec{y}\,^2)\mathbb{1} -
\dfrac{1}{1+\lambda^2 + \mu^2}B^T B = \dfrac{\vec{x}\,^2+\vec{y}\,^2}{1+\lambda^2 + \mu^2}\mathbb{1},
\end{equation}
implying that the relevant off-diagonal part of $\hat{G}^{AB}$ is
\begin{equation}\label{sec5eq6}
-G^{-1}B\rho = -\dfrac{1}{\vec{x}\,^2+\vec{y}\,^2}(\lambda E_1 - \mu E_2,\; \lambda E_2 + \mu E_1).
\end{equation}
Concerning $p=q=r+1$, i.e. the space $\zeta_n$ of rank $(n-1)$ complex $n \times n$ matrices
\begin{equation}\label{sec5eq7}
Z =
\begin{pmatrix}
z_1 \ldots z_n \\
\vdots \\
\ldots z_{n^2}
\end{pmatrix},
\triangle := det \, Z = u+iv = 0,
\end{equation}
$x \in \mathbb{R}^{n^2}$, one can use the known fact \cite{JV} that the real and imaginary part of the determinant of a complex matrix $Z$ are `twin-harmonics', i.e. in particular satisfy
\begin{equation}\label{sec5eq8}
(\nabla u)^2 = (\nabla v)^2, \qquad \nabla u \cdot \nabla v = 0
\end{equation}
as well as
\begin{equation}\label{sec5eq9}
\dfrac{1}{u} u_i u_j u_{ij} = \dfrac{1}{v} v_i v_j v_{ij} \; (=: \rho(x)),
\end{equation}
with $\rho$ being a homogeneous polynomial of degree $2n-4$. In the simplest example, $n=2$,
\begin{equation}\label{sec5eq10}
\begin{vmatrix}
z_1 & z_2 \\ z_3 & z_4
\end{vmatrix} =
(x_1 + iy_1)(x_4 + iy_4) - (x_2 + iy_2)(x_3 + iy_3)
\end{equation}
one has
\begin{equation}\label{sec5eq11}
\nabla u =
\begin{pmatrix}
x_4\\ -x_3\\ -x_2\\ x_1\\ -y_4\\ y_3\\ y_2\\ -y_1
\end{pmatrix} \qquad
\nabla v =
\begin{pmatrix}
y_4\\ -y_3\\ -y_2\\ y_1\\ x_4\\ -x_3\\ -x_2\\ x_1
\end{pmatrix},
\end{equation}
$\rho(x) = 2$;
As, trivially, $\triangle u = 0 = \triangle v$, (\ref{sec5eq9}) implies that - separately -
\begin{equation}\label{sec5eq12}
u(x) := (x_1x_4 - x_2x_3) - (y_1y_4 - y_2y_3) \stackrel{!}{=} 0
\end{equation}
\begin{equation}\label{sec5eq13}
v(x) := (x_1y_4 + x_4y_1) - (x_2y_3 + x_3y_2) \stackrel{!}{=} 0
\end{equation}
define minimal surfaces (which in this case are simply $S^3 \times S^3$ cones, each). The interesting fact is that the \textit{intersection} of the 2 generalized Clifford cones also has zero mean curvature. This is most easily seen by differentiating (\ref{sec5eq8}), yielding
\begin{equation}\label{sec5eq14}
u_iu_{ij} = v_iv_{ij}, \; u_iv_{ij} + u_{ij}v_i = 0,
\end{equation}
hence
\begin{equation}\label{sec5eq15}
\begin{array}{l}
v_i v_{ij} u_j = u_i u_{ij} u_j = \rho u \\
u_i u_{ij} v_j = v_i v_{ij} v_j = \rho v \\
u_i v_{ij} u_j = - v_i u_{ij} u_j = - \rho v \\
v_i u_{ij} v_j = - u_i v_{ij} v_j = - \rho u ,
\end{array}
\end{equation}
all vanishing on $u = 0 = v$; using (\ref{sec2eq3}) it then immediately follows that (\ref{sec5eq7}), resp. $\zeta_n$ (for general $n$) is minimal.\\
For these K\"{a}hlerian examples it may be interesting to
consider its \emph{quantization} or noncommutative or fuzzy analog: most of all K\"{a}hler manifolds admit a quantization by Toeplitz operators or geometric quantization
(see e.g.~\cite{BMS94}) which leads to approximations
of the function algebras by a sequence of finite-dimensional
matrix algebras. Since the complex $Z^\mathbb{C}_r$ has a relatively simple structure as a complex holomorphic fibre
bundle over a cartesian product of two complex Grassmannians
it does not seem to be so hard to compute the Toeplitz quantization by representation theory of the homogeneous space. It may be simpler to do it for the projectivization
since one is then working inside complex projective space
which is compact.  In the particular example of complex rank $1$ $2\times 2$-matrices, the determinant condition gives
$Z^\mathbb{C}_r$ and its projectivization the structure of
a complex quadric for which quantization exist, see e.g.
\cite{DOP04} (concerning recent work on quantum minimal surfaces, see e.g. [12]).

\noindent  \textbf{2.} Another possible generalization is the case of \emph{pseudo-euclidean spaces}: consider again the
space $\mathbb{R}^{pq}$ of all real $p\times q$-matrices
where the positive inner product (\ref{EqDefInnerProductOnMatrixSpace}) is generalized by
the following indefinite scalar product
\begin{equation}\label{sec5eq16}
(A,B) := tr(\zeta A^T \eta B )
\end{equation}
where $\eta$ (resp. $\zeta$) is a diagonal $p \times p$ (resp. $q \times q$) matrix with $p_1 \leq p$ (resp. $q_1 \leq q$) entries $+1$ and $p_2 = p-p_1$ (resp. $q_2 = q-q_1$) entries $-1$. The signature of (\ref{sec5eq16})
is easily seen to be $(p_1p_2+q_1q_2,p_1q_2+p_2q_1)$.\\
The submanifold $Z_r\subset \mathbb{R}^{pq}$ of rank $r$ matrices is in general
no longer nondegenerate w.r.t.~(\ref{sec5eq16})
which is exemplified by the simplest non-trivial case, $p=q=2,\; r=1, \;
\zeta =
\begin{pmatrix}
1 & 0 \\ 0 & -1
\end{pmatrix},\;
\eta =
\begin{pmatrix}
1 & 0 \\ 0 & 1
\end{pmatrix},
(A,B) = ((\vec{a}_1\vec{a}_2), (\vec{b}_1\vec{b}_2)) = \vec{a}_1^{\,T} \vec{b}_1 - \vec{a}_2\vec{b}_2;
$
\begin{equation}\label{sec5eq18}
X \begin{pmatrix}
\vec{a} \\ \lambda
\end{pmatrix} :=
(\vec{a}, \lambda \vec{a}); \; \vec{a} =
\begin{pmatrix}
a^1 \\ a^2
\end{pmatrix} \in \mathbb{R}^2, \lambda \in \mathbb{R}
\end{equation}
gives $det(\hat{G}_{..}) = \vec{a}^T \vec{a} (\lambda^2-1)$,
meaning that the 3 dimensional $Z_1$ (in $\mathbb{R}^{2,2}$) defined by (\ref{sec5eq18}) contains a co-dimension $1$ $(\lambda^2 = 1 )$ singular part.\\
In order to get geometrically defined nondegenerate submanifolds consider the
following open submanifold $Z'_r$ of $Z_r$: fix
two integers $\tilde{p}_1,\tilde{q}_1$ with
$0\leq\tilde{p}_1\leq \min\{p,r\}$,
$0\leq\tilde{q}_1\leq \min\{q,r\}$,
and set
$\tilde{p}_2=r-\tilde{p}_1$, $\tilde{q}_2=r-\tilde{q}_1$.
We shall use the short notation $V_M\subset \mathbb{R}^p$
(resp.~$W_M$) to be the column space (resp.~row space) of the $p\times q$-matrix $M\in Z_r$. Moreover for
a  finite-dimensional vector space equipped with an indefinite nondegenerate symmetric bilinear form $g$ we call a vector subspace $P$ \emph{of signature
$(a,b)$} if the restriction of $g$ to $P\times P$ is nondegenerate and has signature $(a,b)$. With these notations
we set
\begin{equation}
    Z'_r=\{M\in Z_r~|~V_M~\mathrm{of~signature~}
    (\tilde{p}_1,\tilde{p}_2), W_M~\mathrm{of~signature~}
    (\tilde{q}_1,\tilde{q}_2)  \}.
\end{equation}
where we have suppressed the obvious dependence of $Z'_r$ on the choice of integers $\tilde{p}_1,\tilde{q}_1$. A lengthy computation shows that for each such choice $Z'_r$ is a
nondegenerate submanifold of $\mathbb{R}^{pq}$ equipped with
(\ref{sec5eq16}) whose induced metric has signature
\begin{equation}
 (-\tilde{p}_1\tilde{q}_1-\tilde{p}_2\tilde{q}_2
  +p_1\tilde{q}_1+p_2\tilde{q}_2+\tilde{p}_1q_1+
    \tilde{p}_1q_1,
   -\tilde{p}_1\tilde{q}_2-\tilde{p}_2\tilde{q}_1
   +p_1\tilde{q}_2+p_2\tilde{q}_1+\tilde{p}_1q_2+
   \tilde{p}_2q_1 ).
\end{equation}
As a drawback of the above indefinite scalar product
(\ref{sec5eq16}) we can deduce -using the preceding equation-- that a Minkowski
signature $(1,s)$ can only be obtained for rather trivial cases.
A generalization of the techniques described in the previous
Sections, in particular Section 3, shows that each
submanifold $Z'_r$ is helicoidal and hence minimal.

\end{document}